\documentclass[a4paper,11pt]{amsart}

\usepackage{mathtools}
\usepackage{amsthm}
\usepackage{amssymb}
\usepackage[hidelinks]{hyperref}
\usepackage[nameinlink,noabbrev]{cleveref}

\newtheorem{theorem}{Theorem}[section]
\newtheorem{lemma}[theorem]{Lemma}

\theoremstyle{definition}
\newtheorem{definition}[theorem]{Definition}
\newtheorem{problem}[theorem]{Problem}
\newtheorem{counterexample}[theorem]{Counterexample}
\crefname{problem}{Problem}{Problems}
\Crefname{problem}{Problem}{Problems}
\crefname{counterexample}{Counterexample}{Counterexamples}
\Crefname{counterexample}{Counterexample}{Counterexamples}
\theoremstyle{remark}
\newtheorem{remark}[theorem]{Remark}

\newcommand{\Z}{\mathbb Z}
\newcommand{\Q}{\mathbb Q}
\newcommand{\Loag}{\mathcal L_{\mathrm{oag}}}

\title{A note on definable endomorphisms of ordered abelian groups}
\author{Haitao Ji, Fengshuo Xu, and Dong Qiu$^{*}$}\thanks{Corresponding author. E-mail: qiudong@gux.edu.cn (D. Qiu).}

\date{}

\begin{document}
\maketitle

\begin{center}
\small
School of Mathematics, Guangxi University, Nanning 530004, China
\end{center}

\begin{abstract}
We answer Kourovka Notebook Problem~18.16 affirmatively in the pure language of ordered abelian groups, with parameters allowed.  The piecewise-affine description of definable functions reduces the question to an algebraic rigidity theorem: an additive endomorphism of a torsion-free abelian group that is covered by finitely many rational-affine laws has one global rational slope.    The parameter-free pure-language case is included, whereas the unrestricted expansion-language variant admits a simple counterexample.
\end{abstract}

\noindent\textbf{Keywords.} Ordered abelian group; definable endomorphism; piecewise-affine function; torsion-free abelian group

\noindent\textbf{2020 Mathematics Subject Classification.} 06F20; 03C60, 03C64.

\section{Introduction}

O.~V.~Belegradek posed the following question as Problem~18.16 in the Kourovka Notebook~\textup{\cite{kourovka21}}:
\begin{problem}\label{prob:1816}
\textup{\cite{kourovka21}} Is every definable endomorphism of every ordered abelian group of the form $x\mapsto rx$ for some $r\in\Q$?
\end{problem}

The problem remains listed without a solution comment in the current edition.  We use the standard convention that ``definable'' means definable with parameters unless stated otherwise~\textup{\cite{dolichGoodrick2017}}, and we work in the pure language $\Loag=\{0,+,<\}$.  Thus the parameter-free pure-language case is included by taking the parameter set to be empty.  This language restriction is essential; \cref{ctr:arbitrary-expansion} shows that the unrestricted expansion-language variant is false.

Cluckers and Halupczok proved that every definable map $G^n\to G$ in an ordered abelian group is rational-affine on the members of a finite definable partition~\textup{\cite{cluckersHalupczok2011}}.  For groups of finite regular rank, piecewise linearity had previously been obtained by Belegradek, Verbovskiy, and Wagner~\textup{\cite{belegradekVerbovskiyWagner2003}}.  Definable endomorphisms in expansions of ordered groups belong to a broader setting studied, for example, by Miller~\textup{\cite{miller2001}}.  The pure-language piecewise-linearity result does not by itself say that the finitely many affine laws of an additive endomorphism have the same slope.

The deduction is completed by an elementary algebraic step, for which we give a self-contained proof: a finite rational-affine cover of an endomorphism on any torsion-free abelian group collapses to one rational-linear law.  First, applying the cover to sufficiently many integer multiples of one element removes the affine offset at that element.  Applying the resulting pointwise statement along an affine line then gives one cover slope simultaneously at any prescribed pair of elements.  A nonzero base point makes that slope unique.  Since every ordered abelian group is torsion-free, the piecewise-affine theorem then answers the Kourovka problem.

\section{Finite affine covers and global slopes}

We write the group operation additively and use $kx$ for the integer multiple of $x$ by $k\in\Z$.

\begin{definition}\label{def:finite-affine-cover}
\textup{\cite{cluckersHalupczok2011}} Let $G$ be an abelian group and let $f\colon G\to G$ be an endomorphism.  A \emph{finite rational-affine cover} of $f$ consists of a finite nonempty index set $I$, subsets $(P_i)_{i\in I}$ covering $G$, integers $a_i,d_i\in\Z$ with $d_i\ne0$, and elements $b_i\in G$ such that
\begin{equation}\label{eq:affine-piece}
 d_i f(x)=a_i x+b_i\qquad(x\in P_i).
\end{equation}

\end{definition}
Relative to the finite definable partition in the piecewise-affine theorem, this definition retains only the affine identities: the sets $P_i$ need not be definable and need only cover the group rather than form a partition. 
\begin{lemma}\label{lem:pointwise-slope}
Let $G$ be torsion-free, and suppose that an endomorphism $f\colon G\to G$ has a finite rational-affine cover.  For every $x\in G$, there is an $i\in I$ such that
\begin{equation}\label{eq:pointwise-slope}
 d_i f(x)=a_i x.
\end{equation}
\end{lemma}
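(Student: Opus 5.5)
Fix $x\in G$. The plan is to apply the cover to the integer multiples $kx$ for $k=1,\dots,N$ with $N=|I|+1$, and use the pigeonhole principle to find one piece that contains two of them. Since the $P_i$ cover $G$, each $kx$ lies in some $P_{i(k)}$. As there are more multiples than indices, there exist $1\le k<l\le N$ and a single $i\in I$ with $kx,lx\in P_i$.

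Applying \eqref{eq:affine-piece} to these two points, and using that $f$ is additive so that $f(mx)=mf(x)$, gives
\begin{equation*}
 k\,d_i f(x)=k\,a_i x+b_i,\qquad l\,d_i f(x)=l\,a_i x+b_i .
\end{equation*}
Subtracting the first identity from the second eliminates the offset $b_i$:
\begin{equation*}
 (l-k)\bigl(d_i f(x)-a_i x\bigr)=0 .
\end{equation*}
Since $l-k$ is a nonzero integer and $G$ is torsion-free, we conclude $d_i f(x)=a_i x$, which is \eqref{eq:pointwise-slope}.

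There is essentially no obstacle here. The only points to check are that the multiples used are distinct indices $k\neq l$; they need not be distinct elements, which matters when $x=0$. In that case the conclusion is trivial anyway, since $f(0)=0$ and any $i$ works. The torsion-freeness hypothesis is used exactly once, in the final cancellation of $l-k$.
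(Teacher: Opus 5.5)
Your proof is correct and is essentially the paper's argument. The only difference is cosmetic: the paper applies pigeonhole to $0x,x,\dots,|I|x$, while you use $1x,\dots,(|I|+1)x$. In both, you subtract the two affine identities to eliminate $b_i$ and cancel the nonzero integer factor using torsion-freeness.
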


\begin{proof}
Write $N=|I|$.  Among the $N+1$ elements $0x,x,\ldots,Nx$, two, say $px$ and $qx$ with $0\le p<q\le N$, belong to the same set $P_i$.  Applying \eqref{eq:affine-piece} at these two points and using additivity gives
\[
 p\,d_i f(x)=p\,a_i x+b_i,
 \qquad
 q\,d_i f(x)=q\,a_i x+b_i.
\]
Subtracting yields
\[
 (q-p)\bigl(d_i f(x)-a_i x\bigr)=0.
\]
Since $q-p\ne0$ and $G$ is torsion-free, \eqref{eq:pointwise-slope} follows.
\end{proof}
\Cref{lem:pointwise-slope} shows that the offsets in \eqref{eq:affine-piece} disappear after choosing a piece that may depend on the point.
\begin{lemma}\label{lem:common-slope}
Under the hypotheses of \cref{lem:pointwise-slope}, for every $x,y\in G$ there is an $i\in I$ such that
\[
 d_i f(x)=a_i x
 \quad\text{and}\quad
 d_i f(y)=a_i y.
\]
\end{lemma}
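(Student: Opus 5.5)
The plan is to apply \cref{lem:pointwise-slope} along the affine line through $x$ in direction $y$, namely to the elements $x+ky$, and to combine this with a second pigeonhole argument, as sketched in the introduction. Write $N=|I|$ and consider the $N+1$ elements
\[
 z_k = x + k y \qquad (k=0,1,\ldots,N).
\]
For each $k$, \cref{lem:pointwise-slope} applied to $z_k$ gives an index $i_k\in I$ with $d_{i_k} f(z_k)=a_{i_k} z_k$. Since there are $N+1$ values of $k$ and only $N$ indices, there are $0\le p<q\le N$ with $i_p=i_q=:i$.

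Next I extract the common law from these two identities. By additivity of $f$ they read
\[
 d_i f(x) + p\,d_i f(y) = a_i x + p\,a_i y,
 \qquad
 d_i f(x) + q\,d_i f(y) = a_i x + q\,a_i y.
\]
Subtracting gives $(q-p)\bigl(d_i f(y)-a_i y\bigr)=0$. Since $q-p\ne 0$ and $G$ is torsion-free, this yields $d_i f(y)=a_i y$. Substituting back into the first identity then gives $d_i f(x)=a_i x$, so the same index $i$ works for both $x$ and $y$.

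I do not expect a genuine obstacle. The only point requiring care is the choice of the points $z_k$. They must be chosen so that the offsets $b_i$ have already been removed, which is exactly what \cref{lem:pointwise-slope} provides. They must also vary along $y$ while keeping $x$ as a fixed base point, so that subtracting isolates $y$ and back-substitution recovers $x$. The degenerate cases $x=0$, $y=0$ or $x=y$ need no separate treatment, since the argument never divides by anything but the nonzero integer $q-p$.
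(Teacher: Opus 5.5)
Your proposal is correct and matches the paper's proof: you apply \cref{lem:pointwise-slope} to the $|I|+1$ points $x+ky$ and use pigeonhole to get $p<q$ sharing an index $i$. Subtracting and invoking torsion-freeness gives $d_i f(y)=a_i y$, and back-substitution then yields $d_i f(x)=a_i x$.
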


\begin{proof}
For each $k\in\{0,\ldots,|I|\}$, apply \cref{lem:pointwise-slope} to $x+ky$ and choose an index $i_k$ for which
\begin{equation}\label{eq:line-slope}
 d_{i_k}f(x+ky)=a_{i_k}(x+ky).
\end{equation}
The indices $i_0,\ldots,i_{|I|}$ all belong to $I$, so two of them coincide; thus $i_p=i_q=i$ for some $p<q$.  Subtracting the two instances of \eqref{eq:line-slope} and using additivity gives
\[
 (q-p)\bigl(d_i f(y)-a_i y\bigr)=0.
\]
Torsion-freeness gives $d_i f(y)=a_i y$.  Substitution into the instance of \eqref{eq:line-slope} for $p$ then gives $d_i f(x)=a_i x$.
\end{proof}

\begin{theorem}\label{thm:collapse}
Let $G$ be a torsion-free abelian group and let $f\colon G\to G$ be an endomorphism with a finite rational-affine cover.  Then there are integers $a,d\in\Z$, with $d\ne0$, such that
\begin{equation}\label{eq:global-slope}
 d f(x)=a x\qquad\text{for every }x\in G.
\end{equation}
\end{theorem}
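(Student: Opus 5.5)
We reduce \cref{thm:collapse} to \cref{lem:common-slope} by fixing a base point and comparing every other element against it. First, dispose of the trivial case $G=0$: there any choice, say $a=0$ and $d=1$, satisfies \eqref{eq:global-slope}. So assume $G\ne0$ and fix a nonzero element $x_0\in G$.

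The key step is to compare slopes at $x_0$. For each $y\in G$, \cref{lem:common-slope} applied to the pair $(x_0,y)$ gives an index $i(y)\in I$ such that
\[
 d_{i(y)}f(x_0)=a_{i(y)}x_0
 \quad\text{and}\quad
 d_{i(y)}f(y)=a_{i(y)}y.
\]
We want these indices to determine one common rational slope. Suppose $i,j\in I$ both satisfy the identity at $x_0$, that is, $d_if(x_0)=a_ix_0$ and $d_jf(x_0)=a_jx_0$. Multiplying the first by $d_j$ and the second by $d_i$ and subtracting gives
\[
 (a_id_j-a_jd_i)x_0=0.
\]
Since $x_0\ne0$ and $G$ is torsion-free, this forces $a_id_j=a_jd_i$, so $a_i/d_i=a_j/d_j$ as rationals. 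Hence the ratio $a_i/d_i$ is the same for every index $i$ that satisfies the identity at $x_0$.

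To finish, fix one such index $i_0$ (one exists, for instance $i(x_0)$), and set $a=a_{i_0}$ and $d=d_{i_0}\ne0$. Take any $y\in G$ and write $i=i(y)$. Then $a_id=a d_i$, and we compute
\[
 d_i\,(d f(y))=d\,(a_i y)=a_i d\,y=d_i\,(a y).
\]
This gives $d_i\bigl(df(y)-ay\bigr)=0$. Because $d_i\ne0$ and $G$ is torsion-free, it follows that $df(y)=ay$, which is \eqref{eq:global-slope}.

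The only real obstacle is ensuring that the slope does not depend on the auxiliary index chosen for each $y$. This is handled by the torsion-free cancellation at the nonzero base point $x_0$, which is exactly why a nonzero element is needed. The remaining steps are routine clearing of denominators.
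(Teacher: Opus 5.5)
Your proof is correct and is essentially the paper's argument. You fix a nonzero base point $x_0$, apply \cref{lem:common-slope} to the pair $(x_0,y)$, cancel at $x_0$ by torsion-freeness to get $a_id_{i_0}=a_{i_0}d_i$, and then cancel the nonzero factor $d_i$; the only cosmetic difference is that you obtain $i_0$ from \cref{lem:common-slope} applied to $(x_0,x_0)$ rather than from \cref{lem:pointwise-slope}.
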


\begin{proof}
If $G=0$, take $a=0$ and $d=1$.  Otherwise choose $x_0\ne0$.  By \cref{lem:pointwise-slope}, there is an $i_0\in I$ such that
\[
 d_{i_0}f(x_0)=a_{i_0}x_0.
\]
Fix $y\in G$.  By \cref{lem:common-slope}, some $i\in I$ satisfies
\[
 d_i f(x_0)=a_i x_0,
 \qquad
 d_i f(y)=a_i y.
\]
Cross-multiplying the two equations at $x_0$ gives
\[
 (d_i a_{i_0}-d_{i_0}a_i)x_0=0.
\]
Since $x_0\ne0$ and $G$ is torsion-free,
\begin{equation}\label{eq:cross-slope}
 d_i a_{i_0}=d_{i_0}a_i.
\end{equation}
Consequently,
\[
 d_i\bigl(d_{i_0}f(y)-a_{i_0}y\bigr)
 =d_{i_0}d_i f(y)-d_i a_{i_0}y
 =d_{i_0}a_i y-d_i a_{i_0}y=0,
\]
where the last equality uses \eqref{eq:cross-slope}.  As $d_i\ne0$, torsion-freeness yields $d_{i_0}f(y)=a_{i_0}y$.  Since $y$ was arbitrary, \eqref{eq:global-slope} holds with $a=a_{i_0}$ and $d=d_{i_0}$.
\end{proof}
Thus $f$ is multiplication by the rational number $a/d$ after embedding $G$ in its divisible hull~\textup{\cite{fuchs1970}}.
\section{The definable endomorphism problem}

\begin{lemma}\label{thm:piecewise-affine}
\textup{\cite{cluckersHalupczok2011}} If $G$ is an ordered abelian group and $h\colon G^n\to G$ is $\Loag$-definable with parameters, then $G^n$ has a finite definable partition such that on each part
\[
 h(x_1,\ldots,x_n)=\frac{1}{s}\left(r_1x_1+\cdots+r_nx_n+b\right)
\]
in the sense that $s h(x_1,\ldots,x_n)=r_1x_1+\cdots+r_nx_n+b$, for suitable integers $r_1,\ldots,r_n,s$ with $s\ne0$ and a suitable $b\in G$.
\end{lemma}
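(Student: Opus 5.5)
This lemma is cited from Cluckers and Halupczok, so my plan is to reduce it to their quantifier elimination rather than re-prove it from scratch. I will work in their multi-sorted language. It has the main sort $G$ and the auxiliary sorts $S_n = G/nG$ and $T_n, T_n^+$, which code cosets of definable convex subgroups and their cuts. Their theorem says that every $\Loag$-definable subset of $G^{n+1}$ is a finite Boolean combination of sets of two kinds. The first kind are order and congruence conditions on $\Z$-linear terms, namely $t(\bar x,y) \bmod_m \in$ (definable subsets of auxiliary sorts). The second kind are inequalities $t(\bar x,y) \diamond_m \alpha$ relative to auxiliary-sort data. Here $t$ ranges over integer linear combinations of the variables plus parameters.

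The steps, in order, are as follows.

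First, apply this quantifier elimination to the graph $\Gamma_h = \{(\bar x,y): y=h(\bar x)\}$. That writes $\Gamma_h$ as a finite union of cells. Each cell is cut out by finitely many literals in terms $t_j(\bar x,y)=c_j y - \ell_j(\bar x)-b_j$ with $c_j\in\Z$.

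Second, fix $\bar x$ and use that the fibre of $\Gamma_h$ over $\bar x$ is the singleton $\{h(\bar x)\}$. Inside one cell, the literals involving $y$ with $c_j\neq 0$ constrain $y$ to an intersection of congruence classes and of "intervals" whose endpoints are determined by $\ell_j(\bar x)/c_j$ up to auxiliary cut data. The crux is that a nonempty set of this shape which is a single point must have $y$ pinned by an actual equality. Such an equality is $c_j y = \ell_j(\bar x)+b_j$ for some $j$, arising either from an equality literal or from a closed interval degenerating to an endpoint. The alternative would be an open condition, or a condition modulo a nontrivial convex subgroup, or a congruence class. Each of these is infinite whenever nonempty, since $G$ is torsion-free; a nontrivial convex subgroup coset or a nondegenerate interval intersected with a coset of $mG$ is infinite.

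Third, for each cell $C$ and each index $j$ with $c_j\ne0$, let $D_{C,j}$ be the set of $\bar x$ for which $h(\bar x)$ satisfies $c_jh(\bar x)=\ell_j(\bar x)+b_j$. This set is definable. By the second step the sets $D_{C,j}$ cover $G^n$. Refining them into a partition, by taking the first index that works, gives the claimed finite definable partition with $s=c_j$, the $r$'s the coefficients of $\ell_j$, and $b=b_j$.

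The main obstacle is the second step. It needs a precise enough form of the Cluckers–Halupczok elimination to guarantee that "singleton fibre" forces a linear equation. The delicate cases are literals comparing $y$ to cuts in the auxiliary sorts $T_n^+$, since such a cut is not realized by an element of $G$. I would handle them first by showing that a cut literal alone defines a union of cosets of a convex subgroup, hence infinite or empty in the fibre unless that subgroup is trivial. When the subgroup is trivial the cut is given by an element, and the literal reduces to an ordinary linear inequality. Granting this, the rest is bookkeeping.
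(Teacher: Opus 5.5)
The paper gives no proof of this lemma; it imports it from Cluckers--Halupczok. Deriving it from their quantifier elimination is the natural plan, but your second step is false as stated. You claim that an open condition, or a nondegenerate interval intersected with a coset of $mG$, is infinite whenever nonempty, ``since $G$ is torsion-free''. Torsion-freeness makes nonzero subgroups and their cosets infinite, but it says nothing about intervals. If $G$ has a least positive element $e$, the open interval $(a,a+2e)$ is just $\{a+e\}$. Concretely, take $G=\Z$, $h(x)=x+1$, and suppose the graph is presented by the quantifier-free formula $x<y\wedge y<x+2$. Every fibre is a singleton, yet neither $y=x$ nor $y=x+2$ ever holds on the graph. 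So all your sets $D_{C,j}$ are empty and do not cover $G$. Hence the pinning equation need not be one of the literal equations $c_jy=\ell_j(\bar x)+b_j$ with the literal's own constant $b_j$, and Step~3 collapses. Your argument is really a density argument, and it only handles dense $G$.

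The missing idea is a uniform perturbation argument that covers both cases. Let $M$ be a common multiple of all moduli in the cell. If $G$ is discrete, take $\epsilon=e$, which is $\emptyset$-definable. If $G$ is dense, take $\epsilon>0$ lying in each nontrivial convex subgroup occurring in the cell, with $|c_j|M\epsilon$ below every nonzero value $|c_jh(\bar x)-\ell_j(\bar x)-b_j|$.

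Replacing $y=h(\bar x)$ by $y+M\epsilon$ preserves all congruence literals and all literals taken modulo a nontrivial convex subgroup. In the dense case it also preserves every order literal and negated equality whose term is nonzero at $y$. So a singleton fibre forces $c_jh(\bar x)=\ell_j(\bar x)+b_j$ for some $j$ with $c_j\ne0$. In the discrete case, some literal must break under the shift. This yields $c_jh(\bar x)=\ell_j(\bar x)+b_j+ke$ for some $j$ and some integer $k$ with $|k|\le|c_j|M$. These finitely many extra laws, with $b=b_j+ke$, restore the cover.

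Separately, the Cluckers--Halupczok elimination is only relative to the auxiliary sorts. Auxiliary-sort quantifiers survive, and the auxiliary data in a literal can depend on $y$. So you cannot treat the cut data as fixed endpoints. You must fix auxiliary witnesses at the given point and check that the perturbation leaves those data unchanged.
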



\begin{theorem}\label{cor:1816}
 If $G$ is an ordered abelian group and $f\colon G\to G$ is a definable endomorphism, then there are $a,d\in\Z$, with $d\ne0$, such that
\[
 d f(x)=a x\qquad\text{for every }x\in G.
\]
\end{theorem}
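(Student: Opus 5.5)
The plan is to combine \cref{thm:piecewise-affine} in the case $n=1$ with \cref{thm:collapse}. The piecewise-affine lemma produces a finite rational-affine cover in the sense of \cref{def:finite-affine-cover}. The collapse theorem then turns that cover into a single global rational slope, once we know $G$ is torsion-free.

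First, take $h=f$ with $n=1$ in \cref{thm:piecewise-affine}. Here ``definable'' means $\Loag$-definable with parameters, matching the convention fixed in the introduction. This gives a finite definable partition $(P_i)_{i\in I}$ of $G$, together with integers $r_i, s_i$ with $s_i\neq 0$ and elements $b_i\in G$, such that $s_i f(x)=r_i x+b_i$ for all $x\in P_i$. Setting $a_i=r_i$ and $d_i=s_i$, this is exactly a finite rational-affine cover of the endomorphism $f$:
\begin{itemize}
\item the index set is finite, and we may discard empty parts;
\item it is nonempty whenever $G\neq 0$, and if $G=0$ we can simply take $a=0$, $d=1$;
\item a partition is in particular a cover.
\end{itemize}
Note that definability of the parts is not needed, as the remark after the definition points out.

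Second, check that $G$ is torsion-free. If $x\neq 0$, say $x>0$ after replacing $x$ by $-x$, then $kx\ge x>0$ for every $k\ge 1$, by induction using translation-invariance of the order. Hence $kx\neq 0$ for all $k\neq 0$. Applying \cref{thm:collapse} now yields $a,d\in\Z$ with $d\neq 0$ and $df(x)=ax$ for all $x\in G$.

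There is no real obstacle here. The only points requiring care are:
\begin{itemize}
\item verifying that the output of \cref{thm:piecewise-affine} literally matches \cref{def:finite-affine-cover}, with $s\ne0$ playing the role of $d_i\neq 0$ and with a nonempty index set;
\item recording the one-line torsion-freeness argument for ordered abelian groups.
\end{itemize}
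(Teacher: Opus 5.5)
Your proposal is correct and matches the paper's proof: apply \cref{thm:piecewise-affine} with $n=1$ to get a finite rational-affine cover, note that ordered abelian groups are torsion-free, and invoke \cref{thm:collapse}. Your extra bookkeeping is sound, namely discarding empty parts and the explicit induction $kx\ge x>0$. The separate treatment of $G=0$ is harmless but unnecessary, since $G$ always contains $0$, so the partition always has a nonempty part.
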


\begin{proof}
Apply \cref{thm:piecewise-affine} with $n=1$ to the definable function $f\colon G\to G$.  Its finite definable partition supplies a finite rational-affine cover in the sense of \cref{def:finite-affine-cover}.  Every ordered abelian group is torsion-free, so \cref{thm:collapse} gives the asserted integers $a$ and $d$.  Taking the parameter set to be empty gives the parameter-free case.
\end{proof}
Thus, the answer to \cref{prob:1816} is affirmative for definability in $\Loag$, with parameters allowed.  In particular, it is affirmative for parameter-free definability.  
\begin{counterexample}\label{ctr:arbitrary-expansion}
The analogous assertion for arbitrary expansions of the language is false.  Expand $(\mathbb R,0,+,<)$ by a unary function symbol $\lambda$ interpreted by $\lambda(x)=\sqrt2\,x$.  Then $\lambda$ is a parameter-free definable, order-preserving additive endomorphism of the expansion, but it is not multiplication by a rational number.

The reasons are as follows.
The graph of $\lambda$ is parameter-free definable because $\lambda$ is named in the expanded language.  Additivity is immediate, and $\sqrt2>0$ makes $\lambda$ order-preserving.  If $\lambda(x)=rx$ for every $x\in\mathbb R$ and some $r\in\mathbb Q$, then evaluation at $x=1$ gives $r=\sqrt2$, contradicting the irrationality of $\sqrt2$.
\end{counterexample}

\begin{remark}\label{rem:scope}
 In the pure language, parameter-free definability is a special case of definability with parameters, and both satisfy \cref{cor:1816}; arbitrary language expansions fail by \cref{ctr:arbitrary-expansion}.  For a nondivisible $G$, the equation $d f(x)=a x$ is the intrinsic formulation; it says that $f$ agrees with multiplication by $a/d$ in the divisible hull of $G$~\textup{\cite{fuchs1970}}.
\end{remark}
\section*{Acknowledgements}
We employed Canto, our self-built intelligent agent, which provided key references [2]. 
This work was supported by the National Natural Science Foundation of China (Grant No.~12571489) and Guangxi Natural Science Foundation (No.~2025GXNS-FAA069576).
\begingroup
\small
\bibliographystyle{plain}
\bibliography{references}
\endgroup

\end{document}